\newtheorem{theorem}{Theorem}[section]
\newtheorem{corollary}[theorem]{Corollary}
\newtheorem{proposition}[theorem]{Proposition}
\theoremstyle{definition}
\newtheorem{definition}[theorem]{Definition}
\theoremstyle{remark}
\def\Ric{\text{Ric}}
\def\e{\epsilon}
\def\Zbar{{\overline{Z}}}
\def\Wbar{{\overline{W}}}
\def\C{\Bbb C}
\def\R{\Bbb R}
\def\P{\Bbb P}
\def\id{\operatorname{id}}
\def\Ric{\operatorname{Ric}}
\def\I{\operatorname{I}}
\numberwithin{equation}{section}
\begin{document}

\title[Positive Complex Sectional Curvature]{Positive Complex Sectional Curvature,
Ricci Flow and the Differential Sphere Theorem}





\author{Lei Ni}

\address{Department of Mathematics, University of California at San Diego, La Jolla, CA 92093}


\email{lni@math.ucsd.edu}


\author{Jon Wolfson}
\address{Department of Mathematics, Michigan State University, East Lansing, MI 48824}
\email{wolfson@math.msu.edu}

\thanks{The first author was supported in part by NSF grant DMS-0504792  and an Alfred P. Sloan
Fellowship, USA. The second author was supported in part by NSF grant DMS-0604759 }



\date{April 2007}

\date{May 2007}

\keywords{}

\maketitle

\section{ \bf Introduction}

A fundamental question in
 Riemannian geometry is to understand what natural curvature conditions are
  preserved on a Riemannian manifold $M$ under Ricci flow. The well known conditions are:
   positive scalar curvature; positive curvature operator; 2-positive curvature operator;
   positive bisectional curvature and positive Ricci curvature in three
   dimensions. It is also known that the nonnegative sectional
   curvature \cite{N} and nonnegative Ricci curvature \cite{Knopf} are not preserved in general
   (see also \cite{BW1}).
   Recently, Brendle and Schoen \cite{BS} in an important paper
   have added another important condition to this list: positive isotropic
   curvature. See also \cite{Ng}.
    This
   result suggests a program for the classification of compact manifolds with positive isotropic curvature.
    The fact that these various conditions are preserved yields,
   after further study, fundamental results in Riemannian geometry.  For example, B\"ohm and Wilking \cite{BW},
   in foundational work, exploited the preservation of the positivity and
   the 2-positivity of the curvature operator to prove that compact manifolds satisfying either
   one of these conditions are spherical  space forms. Brendle and Schoen used
  the preservation of positive isotropic curvature and an associated curvature condition on
  $M \times \R^2$ to prove that compact manifolds with (pointwise)
  $1/4$-pinched positive sectional curvature admit metrics of constant positive
 curvature and are therefore diffeomorphic to spherical  space forms.
 This resolves the long standing open conjecture, the  differential sphere conjecture.
  For a detailed discussion of the history of this problem see the introduction of
 \cite{BS}.

Another natural curvature condition is  positive complex sectional
curvature, which lies between the positivity of the isotropic
curvature and the positivity of the curvature operator.
 To define this condition on a Riemannian manifold $(M, g)$ consider the complexified tangent bundle
 $TM \otimes \C$. Extend the metric to be symmetric and linear over $\C$ on $TM \otimes \C$ (not hermitian)
 and extend the curvature $R$ linearly over $\C$. Then $(M, g)$ has positive (non-negative) complex
 sectional curvature if for every $p \in M$ and every linearly independent pair of vectors
 $Z, W \in T_p M \otimes \C$:
$$ \langle R(Z, W) \bar{Z}, \bar{W} \rangle > 0, ( \langle R(Z, W) \bar{Z}, \bar{W} \rangle \geq 0).$$

In this short note, we shall show first that the positivity and the
nonnegativity of the complex sectional curvature is preserved under
Ricci flow. Using this, together with techniques of B\"ohm and Wilking, one
can conclude that  if $(M, g)$ is a compact Riemannian manifold with
positive complex sectional curvature then the normalized Ricci flow
deforms the metric to a metric of constant positive curvature. 
We then use earlier work of Yau and Zheng  \cite{YZ} to show that a
metric with strictly (pointwise) $1/4$-pinched sectional curvature
has positive complex sectional curvature. This gives a direct 
proof of Brendle-Schoen's recent differential sphere theorem, bypassing
any discussion of positive isotropic curvature.
A further application of our approach is a characterization of
space forms using a weaker point-wise $1/4$-pinched condition.

\begin{theorem}\label{general}
Let $(M, g_0)$ be a compact Riemannian manifold. Assume that there
exists continuous function $k(p), \delta(p) \ge 0$ such that
$\mathcal{P}=\{p\, | k(p)>0\}$ is dense and $\delta(p_0)>0$ for some
$p_0\in \mathcal{P}$, with the property that
$$
\frac{1+\delta(p)}{4}k(p)\le R(X, Y, X, Y)/|X\wedge Y|^2\le
(1-\delta(p))k(p).
$$
Then the normalized Ricci flow deforms $(M, g_0)$ into a metric of
constant curvature.  Consequently $M$ is diffeomorphic
 to a spherical space form.
\end{theorem}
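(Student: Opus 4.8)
The plan is to reduce Theorem~\ref{general} to two ingredients already announced in the introduction---the preservation of nonnegative complex sectional curvature under the Ricci flow, and the fact that a compact manifold with \emph{strictly} positive complex sectional curvature is deformed by the normalized Ricci flow to a space form---and to bridge the gap between ``nonnegative'' and ``strictly positive'' by a strong maximum principle.

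\emph{Step 1 (the pinching is an algebraic statement about $R_p$).} At a point with $k(p)=0$ the hypothesis reads $0\le R(X,Y,X,Y)\le 0$, so $g_0$ is flat at $p$ and its complex sectional curvature vanishes there. At a point with $k(p)>0$ the hypothesis says precisely that the sectional curvature at $p$ is pointwise pinched with ratio at least $\tfrac{1+\delta(p)}{4(1-\delta(p))}\ge\tfrac14$. By the algebraic consequence of the Yau--Zheng estimate quoted in the introduction---together, for the borderline points $\delta(p)=0$, with a trivial continuity argument (perturb $R_p$ by a small positive multiple of the round curvature tensor, which strictly improves the pinching, apply the strict case, and let the perturbation tend to zero)---this forces $\langle R(Z,W)\bar Z,\bar W\rangle\ge 0$ for all $Z,W\in T_pM\otimes\C$, with strict inequality whenever $\delta(p)>0$. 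Since $k$ and $\delta$ are continuous with $k(p_0)>0$ and $\delta(p_0)>0$, there is an open neighborhood $U\ni p_0$ on which $\tfrac{1+\delta}{4(1-\delta)}$ is bounded below by a constant strictly greater than $\tfrac14$. Hence $(M,g_0)$ has nonnegative complex sectional curvature everywhere and strictly positive complex sectional curvature on all of $U$. (Density of $\mathcal{P}$ guarantees, in addition, that the flat locus $\{k=0\}$ is nowhere dense.)

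\emph{Step 2 (flow plus strong maximum principle).} Run the Ricci flow $g(t)$, $t\in[0,T)$, with $g(0)=g_0$. By the preservation result, $g(t)$ has nonnegative complex sectional curvature for all $t$. The curvature operators with nonnegative complex sectional curvature form a closed, convex, $O(n)$-invariant cone which, by that same result, is fibrewise invariant under the reaction ODE of the curvature evolution equation; Hamilton's strong maximum principle for this system then produces an $\varepsilon>0$ such that for $t\in(0,\varepsilon)$ the null set of the complex sectional curvature form has locally constant rank on $M$ and is invariant under parallel transport. The curvature is strictly positive at $p_0$ at $t=0$, hence, by continuity of $g(t)$, still strictly positive at $p_0$ for small $t>0$; at such a time the null set is trivial at $p_0$, so by constancy of rank it is trivial everywhere, i.e. $g(t)$ has strictly positive complex sectional curvature on all of $M$. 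Fix $t_1$ in this range.

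\emph{Step 3 (conclusion) and the main obstacle.} $(M,g(t_1))$ is compact with positive complex sectional curvature, so by the convergence statement in the introduction the normalized Ricci flow starting at $g(t_1)$---which also preserves strict positivity of the complex sectional curvature---converges, after rescaling, to a metric of constant positive curvature; concatenating with the short initial arc $[0,t_1]$ exhibits the normalized Ricci flow deforming $g_0$ itself to such a metric, so $M$ carries a metric of constant positive curvature and is diffeomorphic to a spherical space form. The one genuinely delicate point is Step~2: one must verify that the cone ``nonnegative complex sectional curvature'' is well enough behaved---closed, convex, and fibrewise invariant under the reaction ODE, the latter being exactly the content of the preservation theorem---for Hamilton's strong maximum principle for systems to apply, so that a single point of strict positivity at time zero propagates to strict positivity on all of $M$ at positive times. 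Steps~1 and~3, by contrast, are essentially bookkeeping once the Yau--Zheng estimate and the earlier results are granted.
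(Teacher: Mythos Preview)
Your outline is sound, but the paper handles the passage from ``nonnegative'' to ``strictly positive'' complex sectional curvature by a different and more elementary device than the tensor strong maximum principle you invoke in Step~2.

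The paper extracts a \emph{quantitative} consequence of the Yau--Zheng estimate (Proposition~\ref{dualyz}, second part): at a strictly pinched point one has not merely $R(Z,W,\bar Z,\bar W)>0$ but $(R-\epsilon\I)(Z,W,\bar Z,\bar W)\ge 0$ for some $\epsilon>0$. This yields a continuous $f_0\ge 0$ with $f_0(p_0)>0$ (for instance $f_0=\tfrac{\delta k}{4}$) such that $R-f_0\I$ has nonnegative complex sectional curvature at $t=0$. The paper then proves directly (Proposition~4.1, using the identity $Q(\widetilde R)=Q(R)-2f\Ric(R)\wedge\id+(n-1)f^2\I$) that $R-f(\cdot,t)\I$ retains nonnegative complex sectional curvature, where $f$ solves the \emph{scalar} heat equation. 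The scalar strong maximum principle then gives $f>0$ everywhere for $t>0$, hence strictly positive complex sectional curvature everywhere, and Corollary~2.4 finishes.

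What this buys: the paper reduces the entire ``spreading of positivity'' step to the strong maximum principle for a scalar heat equation, which is completely elementary. Your route is conceptually clean but rests on a strong maximum principle for the invariant cone of nonnegative complex sectional curvature---the point you yourself flag as delicate. To actually justify that step one would most likely prove something equivalent to Proposition~4.1 anyway (indeed, Proposition~4.1 can be read as a hands-on proof of exactly the strong maximum principle you need for this particular cone). So your Step~2 is correct in principle, but the paper's $R-f\I$ trick is precisely the missing ingredient that makes it rigorous without appealing to a general black box.
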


It turns out that $M$ has  nonnegative complex sectional curvature
is the same as $M\times \R^2$ has nonnegative isotropic curvature.
We discuss this in the last section.

\bigskip

\section{\bf Ricci flow preserves positive complex sectional curvature}

Let $(M, g)$ be a compact Riemannian $n$-manifold.
Recall that the curvature tensor $R$ is said to have nonnegative complex
sectional curvature if  $\langle R(Z, W) \overline{Z}, \overline{W} \rangle \ge 0$
for any $p\in M$ and  $Z, W \in T_pM \otimes \C$, where $\langle -, - \rangle$
denotes the symmetric inner product. We will henceforth use the notation
$$R(Z, W, \overline{Z}, \overline{W})   =\langle R(Z, W) \overline{Z}, \overline{W} \rangle$$
Let $Z=X+\sqrt{-1} Y$ and $W=U+\sqrt{-1}V$ with $X, Y, U, V \in T_pM$.

Choose $Z$ and $W$  so that they satisfy $\nabla Z=0$, $\nabla W=0$,
$D_t Z=0$ and $D_t W=0$, then, using \cite{Hharnack} (see also
\cite{H86}), we have with respect to the moving frame,

\begin{eqnarray*}
\left(D_t-\Delta\right) R(Z, W, \overline{Z}, \overline{W}) &=&
R_{ZWpq}R_{\Zbar\Wbar pq}+2R_{Zp\Zbar q}R_{Wp\Wbar q}-2R_{Zp\Wbar
q}R_{Wp\Zbar q}
\end{eqnarray*}
Here $R_{ZWpq}=R(Z, W, e_p, e_q)$, $p,q = 1, \dots, n$, where
$\{e_p\}$, is a orthonormal frame (also a unitary frame of $T_pM
\otimes \C$). Following the notation of \cite{H86} and \cite{BW} we
denote the right hand side above by $Q(R)(Z, W, \overline{Z},
\overline{W})$ (abbreviated as $Q(R)$).

We will show that  if  $t_0$ is the first time for which there are vectors $Z,W \in T_pM \otimes \C$ such that,
\begin{equation}
\label{eqn:criticalpt} R(Z, W, \overline{Z}, \overline {W})=0.
\end{equation}
then,
$$
Q(R)(Z, W, \overline{Z}, \overline{W})\ge 0.
$$
We remark that from this fact and the Hamilton maximum principle it
follows that Ricci flow preserves positive complex sectional
curvature.

Following \cite{Mok}, for any complex tangent vectors $Z_1$ and
$W_1$ and any real number $s$ define,
$$
f(s)\doteqdot R(Z+sZ_1, W+sW_1, \overline{Z}+s\overline{Z_1},
\overline{W}+s\overline{W_1}) \ge 0
$$
Using that
$$
R(Z, W, \overline{Z}, \overline{W})=0,
$$
it follows that $f(0)=0$ and $f''(0)\ge 0$.
This then implies
\begin{eqnarray}\label{2ndvar1}
0&\le &R(Z_1, W_1, \Zbar, \Wbar)+R(Z_1, W, \Zbar_1, \Wbar)+R(Z_1, W, \Zbar, \Wbar_1)\\
&\, &+R(Z, W_1, \Zbar_1, \Wbar)+R(Z, W_1, \Zbar, \Wbar_1)+R(Z, W,
\Zbar_1, \Wbar_1). \nonumber
\end{eqnarray}
Replacing $Z_1$ by $\sqrt{-1}Z_1$, and $W_1$ by $\sqrt{-1}W_1$
we have that
\begin{eqnarray}\label{2ndvar2}
0&\le &-R(Z_1, W_1, \Zbar, \Wbar)+R(Z_1, W, \Zbar_1, \Wbar)+R(Z_1, W, \Zbar, \Wbar_1)\\
&\, &+R(Z, W_1,\Zbar_1, \Wbar)+R(Z, W_1, \Zbar, \Wbar_1)-R(Z, W,
\Zbar_1, \Wbar_1). \nonumber
\end{eqnarray}
Adding we have,
\begin{eqnarray}\label{2ndvar3}
0&\le &R(Z_1, W, \Zbar_1, \Wbar)+R(Z, W_1, \Zbar,
\Wbar_1)+2\mathcal{R}e\left(R(Z, W_1, \Zbar_1, \Wbar)\right).
\end{eqnarray}

The result now follows from a result of \cite{Mok}. See for example
Lemma 2.86 of \cite{Chowetc} (see also pages 11-12 of \cite{H93}).

We have proved:

\begin{theorem}
\label{thm:PCSC}
The Ricci flow on a compact manifold preserves the cones
consisting of: (i) the curvature operators with nonnegative complex
sectional curvature and (ii) the curvature operators with positive complex
sectional curvature.
\end{theorem}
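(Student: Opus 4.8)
The plan is to deduce the theorem from Hamilton's maximum principle for systems \cite{H86}, with the computation leading to \eqref{2ndvar3} serving as the ODE input.

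\emph{Setting up the cones.} Via the Uhlenbeck trick the curvature operator of a Ricci flow solution evolves by $\frac{\p}{\p t}R=\Delta R+Q(R)$ in a fixed bundle of algebraic curvature operators with a fixed bundle metric. On the model fiber let $C$ be the set of algebraic curvature operators $R$ with $R(Z,W,\Zbar,\Wbar)\ge 0$ for all $Z,W$, and $C_+\subset C$ the subset for which the inequality is strict whenever $Z,W$ are linearly independent. The pair symmetry $R(A,B,C,D)=R(C,D,A,B)$ makes $R(Z,W,\Zbar,\Wbar)$ real, so each defining inequality is real-linear in $R$; hence $C$ is a closed convex cone, clearly $O(n)$-invariant, and -- being cut out fiberwise by the same conditions -- invariant under parallel transport. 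Hamilton's maximum principle therefore reduces the preservation of $C$ under Ricci flow to its preservation under the ODE $\frac{dR}{dt}=Q(R)$.

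\emph{ODE invariance.} A closed convex set is invariant under an ODE exactly when, at each of its boundary points, the velocity lies in the tangent cone, which by convexity means $\phi(Q(R))\ge 0$ for every supporting linear functional $\phi$ at $R$. At $R\in\p C$ such functionals are nonnegative combinations of $S\mapsto S(Z,W,\Zbar,\Wbar)$ over the pairs $Z,W$ realizing \eqref{eqn:criticalpt}. So it suffices to show: if $R\in C$ and $R(Z,W,\Zbar,\Wbar)=0$, then $Q(R)(Z,W,\Zbar,\Wbar)\ge 0$ -- and this is precisely the second-variation argument carried out above. From $f(s)=R(Z+sZ_1,W+sW_1,\overline{Z+sZ_1},\overline{W+sW_1})\ge 0$ with $f(0)=0$ one gets $f'(0)=0$, $f''(0)\ge 0$, hence \eqref{2ndvar1}; replacing $Z_1,W_1$ by $\i Z_1,\i W_1$ and adding gives \eqref{2ndvar3} for all $Z_1,W_1$. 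In $Q(R)$ the term $R_{ZWpq}R_{\Zbar\Wbar pq}=|R_{ZWpq}|^2\ge 0$, and Mok's lemma (Lemma 2.86 of \cite{Chowetc}; pp. 11--12 of \cite{H93}) turns the family \eqref{2ndvar3} -- specialized with $Z_1,W_1$ ranging over a unitary frame and using that $(R_{Zp\Zbar q})$ and $(R_{Wp\Wbar q})$ are Hermitian -- into $2R_{Zp\Zbar q}R_{Wp\Wbar q}-2R_{Zp\Wbar q}R_{Wp\Zbar q}\ge 0$. Hence $Q(R)(Z,W,\Zbar,\Wbar)\ge 0$, proving (i).

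\emph{The strict cone and the main obstacle.} For (ii) I would not try to show $C_+$ is open and preserved -- near linearly dependent pairs $R(Z,W,\Zbar,\Wbar)$ can tend to $0$ -- but rather combine (i) with Hamilton's strong maximum principle: a solution on a compact manifold with nonnegative complex sectional curvature either carries, for $t>0$, a parallel family of planes on which this curvature vanishes, or has strictly positive complex sectional curvature for $t>0$, and the first alternative is excluded by positivity at $t=0$; alternatively one applies the maximum principle to the $\e$-pinched cones $\{R:R-\e\,R_{\mathrm{round}}\in C\}$. The genuine difficulty throughout is Mok's lemma: extracting the sign of the \emph{quadratic} quantity $Q(R)(Z,W,\Zbar,\Wbar)$ from the linear family \eqref{2ndvar3} demands a careful choice of test vectors $Z_1,W_1$ and bookkeeping with the Hermitian symmetries of the curvature matrices. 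A lesser technical point is to normalize the pairs $(Z,W)$ so that the underlying minimization is over a compact set, ensuring a first time with \eqref{eqn:criticalpt} is attained and a minimizer can be put in a convenient normal form.
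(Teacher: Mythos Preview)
Your proposal is correct and follows essentially the same route as the paper: the second-variation argument \`a la Mok yields \eqref{2ndvar3}, Mok's lemma converts this into $Q(R)(Z,W,\Zbar,\Wbar)\ge 0$ at a null pair, and Hamilton's maximum principle closes the argument. Your treatment of the convex-cone framework is more explicit than the paper's, but the substance is identical.

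One minor correction: your hesitation about $C_+$ is unwarranted. The quantity $R(Z,W,\Zbar,\Wbar)$ depends only on the bivector $Z\wedge W$, so after normalizing by the Hermitian norm $|Z\wedge W|^2$ one is minimizing a continuous function over a compact set; strict positivity then gives $R(Z,W,\Zbar,\Wbar)\ge\e\,\I(Z,W,\Zbar,\Wbar)$ for some $\e>0$, i.e.\ $R-\e\,\I\in C$. Hence $C_+$ is exactly the interior of $C$, which is open and automatically preserved once $C$ is. The paper simply invokes Hamilton's maximum principle for both cones without further comment, and no appeal to the strong maximum principle is needed.
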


\bigskip

In \cite{BW}, the following concept is introduced.

\begin{definition}\label{bw-def}
A continuous family $C(s)_{s\in [0,\infty)}$ of closed convex
$O(n)$-invariant cones of full dimension (in the space of algebraic
curvature operators) is called a pinching family if

(1) each $R\in C(s)\setminus \{0\}$ has positive scalar curvature,

(2) $R^2+R^{\#}$ is contained in the interior of the tangent cone of
$C(s)$ at $R$ for all $R\in C(s)\setminus \{0\}$ and all $s\in(0,
\infty).$

(3) $C(s)$ converges in the pointed Hausdorff topology to the
one-dimensional cone $\R_+ \I$ as $s\to \infty$.
\end{definition}

The proof in \cite{BW} (see also the argument of
\cite{BS}) yields the the following theorem.

\begin{theorem} If $C(0)$ is a $O(n)$-invariant cone preserved under
Ricci flow. Assume further that the cone of positive curvature
operators is contained in the interior of $C(0)$ and every $R\in
C(0)$ has nonnegative Ricci curvature. Then there exists a
continuous pinching family $C(s)$ with $C(s)=C(0)$.
\end{theorem}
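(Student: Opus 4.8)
We sketch how the construction of B\"ohm--Wilking \cite{BW}, in the form used in \cite{BS}, yields this statement: the point is that their production of a pinching family out of the cone of (2-)positive curvature operators uses only four features of that cone, each of which is among the present hypotheses on $C(0)$: it is a closed, convex, $O(n)$-invariant cone of full dimension; it is invariant under the Hamilton ODE $\frac{d}{dt}R = R^{2}+R^{\#}$ associated to Ricci flow (which is what "preserved under Ricci flow" means, by Hamilton's maximum principle, and which is equivalent to $R^{2}+R^{\#}$ lying in the tangent cone $T_{R}C(0)$ for every $R\in C(0)$); it contains the cone of positive curvature operators in its interior; and every element of it has nonnegative Ricci curvature.

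Recall from \cite[\S2]{BW} the $O(n)$-irreducible splitting $R = R_{\I}+R_{\Ric}+R_{W}$ of an algebraic curvature operator into scalar, traceless-Ricci and Weyl parts, and the two-parameter family of $O(n)$-equivariant linear isomorphisms $\ell_{a,b}$ that fix the Weyl part, rescale the traceless-Ricci part by a factor affine in $b$, and rescale the scalar part by a factor affine in $a$ and $b$; note $\ell_{0,0}=\operatorname{id}$. I would take $C(s):=\ell_{a(s),b(s)}(C(0))$, where $s\mapsto(a(s),b(s))$, $s\in[0,\infty)$, is the smooth curve with $(a(0),b(0))=(0,0)$ constructed in \cite{BW} (along which $b(s)\to\infty$, with $a(s)$ coupled to $b(s)$ so that the image of the positive-curvature-operator cone collapses onto $\R_{+}\I$). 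Each $C(s)$ is then a closed convex $O(n)$-invariant cone of full dimension, and its $s=0$ member is the given cone $C(0)$. Since the positive-curvature-operator cone lies in the interior of $C(0)$ while $C(0)$ itself lies in $\{\Scal\ge 0\}$ by the nonnegative-Ricci hypothesis, both get squeezed onto $\R_{+}\I$, which is condition (3) of Definition \ref{bw-def}; condition (1) is inherited from $C(0)$ because $\ell_{a,b}$ multiplies the scalar curvature by a positive factor for the relevant parameter values, while $\Scal>0$ on $C(0)\setminus\{0\}$ follows from nonnegativity of the Ricci curvature together with the fact that the positive-curvature-operator cone is interior to $C(0)$.

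The substance is condition (2), and this is the only place where real work is required. Here one uses B\"ohm--Wilking's algebraic identity \cite[\S3]{BW} expressing $\big(\ell_{a,b}(R)\big)^{2}+\big(\ell_{a,b}(R)\big)^{\#}$ as $\ell_{a,b}\big(R^{2}+R^{\#}\big)$ plus explicit correction terms; the content of their computation is that, for $a,b$ in a suitable range and for $R$ with nonnegative Ricci curvature, each correction term lies in the tangent cone of $C(0)$ at the appropriate point, and, unless $R$ is a nonnegative multiple of $\I$, at least one of them points strictly inward. Since $\ell_{a,b}$ is a linear isomorphism it carries $T_{R}C(0)$ onto $T_{\ell_{a,b}(R)}C(s)$, so combining the ODE-invariance of $C(0)$ (so that $R^{2}+R^{\#}\in T_{R}C(0)$) with the strict interiority of the correction terms shows that $\big(\ell_{a,b}(R)\big)^{2}+\big(\ell_{a,b}(R)\big)^{\#}$ lies in the interior of the tangent cone of $C(s)$ at each nonzero point, which is precisely condition (2). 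The main obstacle is therefore bookkeeping rather than conceptual: one must verify that the signs and the localization of the correction terms in \cite[\S3]{BW} depend only on the nonnegativity of the Ricci curvature of $R$ and not on any further property special to the (2-)positive curvature operator cone, and likewise that the admissible parameter range and the limiting curve $(a(s),b(s))$ can be chosen uniformly in the starting cone. Once this is checked, the same curve as in \cite{BW} produces the desired pinching family.
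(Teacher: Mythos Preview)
Your proposal is correct and follows the same approach as the paper, which in fact gives no proof at all beyond the sentence ``The proof in \cite{BW} (see also the argument of \cite{BS}) yields the following theorem.'' You have supplied a faithful sketch of precisely that argument---the B\"ohm--Wilking $\ell_{a,b}$ deformation applied to the given cone, together with the verification that conditions (1)--(3) of Definition~\ref{bw-def} depend only on the hypotheses listed---so there is nothing to compare; your write-up is strictly more detailed than the paper's own treatment.
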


An immediate corollary is:

\begin{corollary}
If $(M, g)$ is a compact Riemannian manifold with
positive complex sectional curvature, then the normalized Ricci flow
deforms $(M, g)$ to a Riemannian manifold of constant positive curvature.
\end{corollary}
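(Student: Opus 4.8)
The plan is to assemble the corollary from three ingredients already laid out in the excerpt: the preservation result (Theorem~\ref{thm:PCSC}), the abstract B\"ohm--Wilking pinching machine (the unnamed theorem just before the corollary), and Hamilton's convergence result for Ricci flow on manifolds that become arbitrarily pinched. Concretely, I would set $C(0)$ to be the cone of algebraic curvature operators with nonnegative complex sectional curvature, and verify the hypotheses of the B\"ohm--Wilking theorem for this cone.

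First I would check that $C(0)$ is a closed, convex, $O(n)$-invariant cone of full dimension: closedness and convexity are immediate from the defining inequality $R(Z,W,\overline Z,\overline W)\ge 0$ (an intersection of closed half-spaces indexed by pairs $(Z,W)$), $O(n)$-invariance follows because the complexified metric and the pairing $\langle R(Z,W)\overline Z,\overline W\rangle$ are natural under orthogonal change of frame, and full dimension follows because the identity curvature operator $\I$ lies in its interior. Preservation of $C(0)$ under Ricci flow is exactly Theorem~\ref{thm:PCSC}(i). Next I would record the two containments needed: (a) the cone of positive curvature operators sits in the interior of $C(0)$ --- this is the standard fact that a positive curvature operator has positive complex sectional curvature, since $R(Z,W,\overline Z,\overline W)=\langle R(\omega),\overline\omega\rangle$ for the complex two-form $\omega=Z\wedge W$, and positivity of $R$ as an operator on $\Lambda^2\otimes\C$ forces this to be strictly positive for independent $Z,W$; and (b) every $R\in C(0)$ has nonnegative Ricci curvature --- this follows by tracing: for a unit real vector $X$, choosing an orthonormal frame $\{X=e_1,e_2,\dots,e_n\}$ and summing $R(X,e_j,\overline X,\overline{e_j})=R(X,e_j,X,e_j)\ge0$ (taking $Z=X$, $W=e_j$ real) gives $\Ric(X,X)\ge0$.

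With these verified, the B\"ohm--Wilking theorem produces a pinching family $C(s)$ with $C(0)$ equal to the nonnegative-complex-sectional-curvature cone. Since $(M,g)$ has \emph{positive} complex sectional curvature, its curvature operator lies in the interior of $C(0)$ at every point; compactness then gives $s_0>0$ with $R\in C(s_0)$ pointwise. Running the (unnormalized) Ricci flow, Theorem~\ref{thm:PCSC} and the pinching-family structure (each $C(s)$ is Ricci-flow invariant and they improve toward $\R_+\I$) show that along the flow the curvature operator is driven into $C(s)$ for $s\to\infty$, i.e. the metric becomes arbitrarily close to constant curvature in a scale-invariant sense. Then Hamilton's maximum-time/convergence argument (as in B\"ohm--Wilking, and originally in Hamilton's work on $3$-manifolds and on $4$-manifolds with positive curvature operator) shows the normalized Ricci flow exists for all time and converges exponentially to a constant-positive-curvature metric.

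The main obstacle --- though it is essentially discharged by quoting the earlier theorems rather than re-proving them --- is the passage from ``the curvature enters every $C(s)$'' to ``the normalized flow converges to a round metric.'' This requires the B\"ohm--Wilking pinching-improvement estimates (the ODE-to-PDE comparison driving the curvature toward $\R_+\I$), Shi's derivative estimates, and Hamilton's convergence theorem; I would simply cite \cite{BW} (and \cite{BS}) for this package, noting that the only input specific to our situation is the verification above that the nonnegative-complex-sectional-curvature cone satisfies the hypotheses of the abstract pinching theorem. Everything else is verbatim B\"ohm--Wilking.
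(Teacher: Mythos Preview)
Your proposal is correct and follows essentially the same route as the paper: set $C(0)$ to be the cone of curvature operators with nonnegative complex sectional curvature, invoke Theorem~\ref{thm:PCSC} for invariance, verify that positive curvature operators lie in the interior and that every $R\in C(0)$ has nonnegative Ricci, then apply the B\"ohm--Wilking pinching family construction and their convergence theorem. You have simply supplied more detail (the explicit checks of closedness, convexity, $O(n)$-invariance, and the trace argument for nonnegative Ricci) where the paper says ``note that'' and ``it is clear that''.
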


\begin{proof} By Theorem \ref{thm:PCSC} we have that the cone
consisting of the curvature operators with nonnegative complex
sectional curvature is invariant under the Ricci flow. Note that the
cone of positive curvature operators is contained in the interior of
the above cone. Also it is clear that if the curvature operator $R$
has nonnegative complex sectional curvature it must has nonnegative
Ricci curvature. The result now follows from the pinching family
construction of \cite{BW} (see also \cite{BS}) and Theorem 5.1 of \cite{BW}.
\end{proof}

\section{\bf $1/4$-pinched implies positive complex sectional curvature}

Yau and Zheng [YZ] prove that if the sectional curvatures are negative and $1/4$ pinched, that is,
$$
-1\le R(X, Y, X, Y)/|X\wedge Y|^2\le -\frac{1}{4}
$$
for any linearly independent vectors $X, Y \in TM$
then the complex sectional curvature is non-positive, that is, $R(Z, W, \Zbar, \Wbar)\le 0$,
for any linearly independent vectors $Z, W \in TM \otimes \C$.
A slight modification of this argument can be used  to show that
if at $p\in M$, for any $X, Y \in T_pM$,
$$
\frac{1+\delta}{4}k(p)\le R(X, Y, X, Y)/|X\wedge Y|^2\le (1-\delta)k(p),
$$
for some $k(p)>0$ then for any any linearly independent vectors $Z, W \in TM \otimes \C$,  $R(Z, W, \Zbar,
\Wbar)>0$. For the sake of the completion we include the argument here.

\bigskip

We start with the lemma of Berger.

\begin{proposition}[Berger]\label{berger}
Suppose that for any $X, Y \in T_pM$ and $k(p) >0$
$$
\frac{1+\delta}{4}k(p)\le R(X, Y, X, Y)/|X \wedge Y|^2\le (1-\delta)k(p),
$$
 If $\{X, Y, U, V\}\in
T_pM$ are linearly independent and
$$
\Delta=\langle X, U\rangle \langle Y, V\rangle -\langle X,
V\rangle\langle Y, U\rangle =0.
$$
Then
\begin{eqnarray}
6\left|R(X, Y, U, V)\right| &\le& \frac{3-5\delta}{5-3\delta}\big(2R(X,Y)+2R(U,V)\\  \nonumber
&\,& + R(X,V)+R(V,Y)+R(X,U)+R(U, Y)\big).
\end{eqnarray}
Here $R(X, Y)=R(X, Y, X, Y)$.
\end{proposition}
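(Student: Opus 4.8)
The plan is to prove this by polarization, following Berger. Write $\mu:=\frac{1+\delta}{4}k(p)$ and $\Lambda:=(1-\delta)k(p)$, so that the sectional curvature hypothesis reads $\mu|P\wedge Q|^2\le R(P,Q)\le\Lambda|P\wedge Q|^2$ for all $P,Q\in T_pM$, where $R(P,Q):=R(P,Q,P,Q)$. Note the arithmetic identities $\Lambda-\mu=\frac{3-5\delta}{4}k(p)$ and $\Lambda+\mu=\frac{5-3\delta}{4}k(p)$, so the asserted constant is precisely $\frac{\Lambda-\mu}{\Lambda+\mu}$; that is the shape one should aim for.

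The first step is a polarization identity. Expanding $R$ multilinearly and using the pair symmetries together with the first Bianchi identity, one has for any algebraic curvature tensor
\[
24\,R(X,Y,U,V)=\sum_{\epsilon\in\{\pm1\}^2}\epsilon_1\epsilon_2\Big(R(X+\epsilon_1U,\,Y+\epsilon_2V)-R(X+\epsilon_1V,\,Y+\epsilon_2U)\Big),
\]
which is just the exact mixed second difference of the identity $6\,R(X,Y,U,V)=\partial_s\partial_t\big(R(X+sU,Y+tV)-R(X+sV,Y+tU)\big)\big|_{0}$ (exact because each side is a polynomial of degree at most $2$ in $s$ and in $t$). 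Denote the right side by $\mathcal{A}_+-\mathcal{A}_-$, a difference of two sums of four sectional curvature numerators $R(P,Q)$, attached to the $2$-planes spanned by $X\pm U$ and $Y\pm V$ and by $X\pm V$ and $Y\pm U$. Two routine expansions drive the proof. First, using $R(A,B+C)+R(A,B-C)=2R(A,B)+2R(A,C)$ repeatedly, one checks that $\mathcal{A}_++\mathcal{A}_-=8R(X,Y)+8R(U,V)+4\big(R(X,U)+R(X,V)+R(Y,U)+R(Y,V)\big)=4\Sigma$, where $\Sigma$ denotes the right side of Proposition~\ref{berger}. Second, expanding $|P\wedge Q|^2=|P|^2|Q|^2-\langle P,Q\rangle^2$ over the same eight planes, the total wedge norm occurring in $\mathcal{A}_+$ minus that occurring in $\mathcal{A}_-$ equals exactly $24\,\Delta$. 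Hence $\mathcal{A}_\pm=2\Sigma\pm12\,R(X,Y,U,V)$, and the hypothesis $\Delta=0$ forces the two total wedge norms to agree; call their common value $B$.

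The pinching hypothesis now closes the argument. From $\mathcal{A}_+\le\Lambda B$ and $\mathcal{A}_-\ge\mu B$, with $\mu>0$ and $\mathcal{A}_-\ge0$ (it is a sum of nonnegative sectional curvatures), one gets $2\Sigma+12R\le\Lambda B\le\frac{\Lambda}{\mu}(2\Sigma-12R)$, which rearranges to $6(\Lambda+\mu)\,R(X,Y,U,V)\le(\Lambda-\mu)\,\Sigma$. Running the same estimate with $\mathcal{A}_+$ and $\mathcal{A}_-$ interchanged, equivalently with $R$ replaced by $-R$, gives the opposite inequality. Therefore $6\,|R(X,Y,U,V)|\le\frac{\Lambda-\mu}{\Lambda+\mu}\,\Sigma=\frac{3-5\delta}{5-3\delta}\,\Sigma$, which is the assertion.

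I expect the real work to be the two combinatorial identities: the whole argument rests on the eight sectional curvature terms summing to exactly $4\Sigma$ and on the corresponding eight wedge norms differing by exactly $24\Delta$, and these require a careful, if elementary, expansion. A secondary point worth noting is that the inequality is not homogeneous under rescaling $X,Y,U,V$ individually, so there is no reduction to an orthonormal frame and all the lengths must be carried through honestly; this is exactly why one works with the combinations $X\pm U$, $Y\pm V$, and so on, rather than with normalized vectors.
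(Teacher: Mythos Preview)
Your argument is correct and is precisely the Berger polarization argument (as recast by Yau--Zheng) that the paper cites in lieu of a proof; in fact your choice of the combinations $X\pm U$, $Y\pm V$, $X\pm V$, $Y\pm U$ is exactly the simplification ``let $a=c=1$'' that the paper recommends. The two identities you flag as the real work---$\mathcal{A}_++\mathcal{A}_-=4\Sigma$ and $B_+-B_-=24\Delta$---do check out (the second is immediate once one notes that $|P\wedge Q|^2$ is itself an algebraic curvature tensor, so the same polarization formula applies to it).
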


\begin{proof} See \cite{YZ}, proof of Lemma 1. One can let $a=
c=1$ to make the argument more transparent.
\end{proof}

\bigskip

\begin{proposition}\label{dualyz} If the sectional
curvature is pointwise $1/4$ pinched, in the sense that
$$
\frac{k(p)}{4} \le R(X, Y, X, Y)/|X\wedge Y|^2\le k(p),
$$
for some function $k(p)>0$, then $R(Z, W, \Zbar, \Wbar)\ge 0$. Moreover if
$$
\frac{1+\delta}{4}k(p)\le R(X, Y, X, Y)/|X\wedge Y|^2\le
(1-\delta)k(p),
$$
for some $\delta>0$, then there exists $\epsilon>0$
such that $(R-\epsilon \I)(Z, W, \Zbar, \Wbar)\ge 0$, where $\I$ is
the identity (complex extension) of $S^2(\wedge^2(\R^n))$.
\end{proposition}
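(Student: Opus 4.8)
The plan is to reduce the complex-sectional-curvature inequality to a real inequality about an algebraic curvature operator and then apply Berger's lemma (Proposition \ref{berger}) in the appropriate signs. Write $Z = X + \sqrt{-1}\,Y$ and $W = U + \sqrt{-1}\,V$ with $X,Y,U,V \in T_pM$. Expanding $R(Z,W,\Zbar,\Wbar)$ multilinearly and using the symmetries of $R$ together with the fact that $R$ is the $\C$-linear (not Hermitian) extension, one gets a real expression; the imaginary part vanishes by the first Bianchi identity and the pair symmetry $R(A,B,C,D) = R(C,D,A,B)$, so $R(Z,W,\Zbar,\Wbar)$ is real and equals a sum of terms of the form $R(X,Y,X,Y)$, $R(X,U)$, etc., plus mixed terms $R(X,Y,U,V)$. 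The key point — exactly as in Yau-Zheng — is that after a normalization we may assume $Z,W$ span a totally isotropic-type configuration: one can arrange by a (complex) change of basis in $\mathrm{span}_\C\{Z,W\}$ that the four real vectors $\{X,Y,U,V\}$ satisfy the orthogonality condition $\Delta = \langle X,U\rangle\langle Y,V\rangle - \langle X,V\rangle\langle Y,U\rangle = 0$, which is precisely the hypothesis needed to invoke Proposition \ref{berger}. This is the step I expect to be the main obstacle: justifying that the reduction to $\Delta = 0$ loses no generality, and handling the degenerate cases where $\{X,Y,U,V\}$ fail to be linearly independent (where one argues directly, since then $R(Z,W,\Zbar,\Wbar)$ simplifies).

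Granting the reduction, the first assertion follows quickly. With $\Delta = 0$, Berger's lemma (applied with $\delta = 0$, so the constant $\tfrac{3-5\delta}{5-3\delta} = \tfrac{3}{5}$) bounds the mixed term $6|R(X,Y,U,V)|$ by $\tfrac{3}{5}$ times the sum of the sectional-curvature terms $2R(X,Y) + 2R(U,V) + R(X,V) + R(V,Y) + R(X,U) + R(U,Y)$. On the other hand, each $R(A,B) = R(A,B,A,B) \ge \tfrac{k(p)}{4}|A\wedge B|^2 \ge 0$ by the pinching hypothesis. Substituting the expansion of $R(Z,W,\Zbar,\Wbar)$ into these estimates, the positive sectional-curvature contributions dominate the mixed terms (the constant $\tfrac{3}{5} < 1$ is what makes this work), and one concludes $R(Z,W,\Zbar,\Wbar) \ge 0$. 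This is a purely bookkeeping computation once the expansion is written out, so I would state the expansion, cite Berger, and collect terms.

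For the second assertion I would run the same argument with the strictly sharper pinching $\tfrac{1+\delta}{4}k(p) \le R(X,Y,X,Y)/|X\wedge Y|^2 \le (1-\delta)k(p)$. Now Berger's constant is $\tfrac{3-5\delta}{5-3\delta} < \tfrac{3}{5}$, so the same estimate gives not merely $R(Z,W,\Zbar,\Wbar) \ge 0$ but $R(Z,W,\Zbar,\Wbar) \ge c(\delta)\, k(p)\, |Z \wedge W|^2$ for some positive constant $c(\delta)$ depending only on $\delta$ and the dimension, where $|Z\wedge W|^2$ denotes the norm of $Z\wedge W$ computed in the (symmetric) complexified metric and is positive whenever $Z,W$ are linearly independent over $\C$. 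By compactness of $M$ and continuity of $k$, we have $k(p) \ge k_0 > 0$, and one checks that $\I(Z,W,\Zbar,\Wbar) = |Z\wedge W|^2$ (up to a fixed positive constant, from the definition of $\I$ as the curvature operator of the round metric), so setting $\epsilon = \tfrac12 c(\delta)\, k_0$ times that constant yields $(R - \epsilon\I)(Z,W,\Zbar,\Wbar) \ge 0$ for all $Z,W$, as required. The only subtlety here beyond the first part is making the quantitative dependence on $|Z\wedge W|^2$ explicit in Berger's estimate, which one gets by keeping track of the homogeneity of both sides under scaling $Z,W$ rather than normalizing them to unit length.
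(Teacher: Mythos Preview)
Your expansion of $R(Z,W,\Zbar,\Wbar)$ and the idea of feeding the mixed term $R(X,Y,U,V)$ into Berger's lemma are both exactly right, but the reduction step you flag as ``the main obstacle'' is in fact a genuine gap, and it is not the condition $\Delta=0$ that is really needed. Look at the endgame: after Berger, the lower bound becomes
\[
\tfrac{4}{5}\bigl(R(X,U)+R(X,V)+R(Y,U)+R(Y,V)\bigr)-\tfrac{2}{5}\bigl(R(X,Y)+R(U,V)\bigr),
\]
and to show this is nonnegative from $\tfrac{k}{4}\le K\le k$ you must control $R(X,Y)$ and $R(U,V)$ from above and the cross terms from below. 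That only balances if you know the four real vectors are \emph{mutually orthogonal} with $|X|=|U|=1\ge |Y|,|V|$; then the bound collapses to $\tfrac{k}{5}(1-|Y|^2)(1-|V|^2)\ge 0$. The bare condition $\Delta=0$ gives you none of this, and a generic complex change of basis in $\mathrm{span}_\C\{Z,W\}$ will not force it either.

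The device the paper (following Yau--Zheng) uses to obtain this strong configuration is variational, not algebraic: set $f(Z,W)=R(Z,W,\Zbar,\Wbar)/(|Z|^2|W|^2)$, arrange $\langle Z,W\rangle=0$ by a preliminary substitution (this \emph{can} always be done and preserves the sign of $f$), and then minimize $f$ on $\P^{n-1}\times\P^{n-1}$ under that constraint. At a minimizer with $f<0$, the Lagrange multiplier equation forces $\langle Z,\Wbar\rangle=0$ as well, hence $\{X,Y\}\perp\{U,V\}$; the residual phase freedom $Z\mapsto\lambda Z$, $W\mapsto\mu W$ then lets you make $X\perp Y$ and $U\perp V$. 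This is the missing idea in your proposal.

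For the second assertion your quantitative route is harder than necessary (and note that the final bound $\tfrac{k}{5}(1-|Y|^2)(1-|V|^2)$ can vanish even for independent $Z,W$, so extracting a uniform $c(\delta)|Z\wedge W|^2$ takes extra work). The paper's argument is a one-liner: for $\epsilon\le \tfrac{\delta}{4}k(p)$ the operator $\widetilde R=R-\epsilon\I$ still satisfies the (non-strict) $1/4$-pinching, so the first part applied to $\widetilde R$ gives $(R-\epsilon\I)(Z,W,\Zbar,\Wbar)\ge 0$ directly.
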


\begin{proof} We follow the proof of \cite{YZ}. Define the function
$$
f(Z, W)=\frac{R(Z, W, \Zbar, \Wbar)}{|Z|^2|W|^2}
$$
on $(\C^n)^*\times (\C^n)^*$ where $|Z|^2=\langle Z, \Zbar \rangle$.
Clearly $f$ is defined on $\P^{n-1}\times \P^{n-1}$. To prove the first statement
of the Proposition it suffices to show that $f(Z,W) \geq 0$ for all $Z,W \in T_pM \otimes \C$.
We shall prove the result by contradiction. Assume that there exist
a pair of vector $(Z, W)$ such that $f(Z, W)<0$.
Notice that $R(aZ+bW, cZ+dW, \overline{aZ+bW},
 \overline{cZ+dW})=|ad-bc|^2R(Z, W, \Zbar, \Wbar)$. Hence by
replacing $(Z, W)$ by $(Z, W-\frac{\langle W, Z\rangle}{\langle Z,
Z\rangle}Z)$ in the case $\langle Z, Z\rangle \ne0$, or by $(Z-W,
Z+W)$ in the case that both $\langle Z, Z\rangle =\langle W, W\rangle=0$
we can assume $\langle Z, W\rangle=0$ without changing the sign of $f(Z,W)$
(though its absolute value is changed). Thus we can assume that the
minimum of $f(Z, W)$ under the constraint
$$
\langle Z, W\rangle=0.
$$
is achieved and is negative. Suppose the minimum is achieved at $(Z,W)$.
Clearly $Z, W$ are linearly independent. Introduce the Lagrange multiplier,
$$
F(Z, W) = f(Z,W) + \lambda |\langle Z, W\rangle|^2,
$$
Then the minimum point  $(Z,W)$ is a critical point of $F$ and hence,
$$
R(Z, W)\Zbar +f(Z, W)|Z|^2 W=0.
$$
Therefore,
$$
\langle \Zbar, W \rangle =\frac{R(Z, W, \Zbar, \Zbar)}{|Z|^2 f(Z, W)}=0.
$$
Thus $\langle Z, W\rangle=0$ and $\langle \Zbar, W \rangle = 0$. Writing
$Z=X+\sqrt{-1}Y$ and $W=U+\sqrt{-1}V$ we conclude that $\{X,
Y\} \perp \{U, V\}$. Observing that $f(\lambda Z, \mu W)=f(Z, W)$, for any
complex scalars $\lambda$ and $\mu$ we see that we can  adjust
$Z$ and $W$ so that $X\perp Y$ and $U\perp V$. Without loss of
the generality we may assume that $1=|X|\ge |Y|$ and $1=|U|\ge |V|$.
Therefore we have,
\begin{eqnarray*}
R(Z, W, \bar{Z}, \bar{W}) &=&R(X, U, X, U)+R(X, V, X, V)+R(Y, U, Y,
U)+R(Y, V, Y, V)\\
&\quad& -2R(X, Y, U, V)\\
&\ge & R(X, U)+R(X, V)+R(Y, U)+R(Y, V)\\
&\,& -\frac{1}{5}\left(2R(X,Y)+2R(U,V)+R(X,V)+R(V,Y)\right.\\
 &\, &\left.+R(X, U)+R(U, Y)\right)\\
&=& \frac{4}{5} \left(R(X, U)+R(X, V)+R(Y, U)+R(Y, V)\right)\\
&\, &-\frac{2}{5}\left(R(X, Y)+R(U, V)\right)\\
&\ge & \frac{k} {5}\left(1-|V|^2\right)\left(1-|Y|^2\right)\ge 0.
\end{eqnarray*}
This contradicts $R(Z, W, \bar{Z}, \bar{W}) < 0$ and therefore proves the first statement
of the Proposition. For the
second statement, observe that for sufficiently
small $\epsilon$, say $\epsilon\le \frac{\delta}{4}k(p)$,
$\widetilde R=R-\e\I$ satisfies the weaker pinching condition.
\end{proof}

The consequence is the recent important result of Brendle and
Schoen.

\bigskip

\begin{corollary}[Brendle-Scheon]\label{bs2}
Assume that $(M, g_0)$ is a compact Riemannian manifold. Assume that
the sectional curvature of $g_0$ satisfies that
$$\frac{1+\delta}{4}k(p)\le R(X, Y, X, Y)/|X\wedge Y|^2\le
(1-\delta)k(p)
$$
for some continuous function $k(p)>0$ and constant $\delta>0$.
Then the normalized
Ricci flow deforms it into metric of constant curvature.
\end{corollary}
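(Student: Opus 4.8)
The plan is to derive Corollary \ref{bs2} as an essentially immediate consequence of Proposition \ref{dualyz} together with the Corollary of Section 2 (the one asserting that a compact manifold of positive complex sectional curvature is deformed by the normalized Ricci flow to a metric of constant positive curvature). So the proof is a short assembly: first promote the pointwise conclusion of Proposition \ref{dualyz} to a uniform statement on all of $M$, then observe this forces positive complex sectional curvature, and finally quote Section 2.

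More precisely, since $M$ is compact and $k$ is continuous and strictly positive, $k_0 := \min_{p\in M} k(p) > 0$. Setting $\e := \frac{\delta}{4}k_0 > 0$, the remark at the end of the proof of Proposition \ref{dualyz} shows that $\widetilde R := R - \e\,\I$ satisfies the weaker pinching bound $\frac{k(p)}{4}\le \widetilde R(X,Y,X,Y)/|X\wedge Y|^2 \le k(p)$ at \emph{every} point of $M$; hence, by the first statement of Proposition \ref{dualyz}, $\widetilde R(Z,W,\Zbar,\Wbar)\ge 0$ for all $p\in M$ and all $Z,W\in T_pM\otimes\C$. Next I would record that the identity curvature operator $\I$ (the curvature operator of the round sphere) has \emph{strictly} positive complex sectional curvature:
\[
\I(Z,W,\Zbar,\Wbar)=\langle Z,\Zbar\rangle\langle W,\Wbar\rangle-\langle Z,\Wbar\rangle\langle W,\Zbar\rangle=|Z|^2|W|^2-|\langle Z,\Wbar\rangle|^2,
\]
which is positive whenever $Z,W$ are linearly independent over $\C$ by the Cauchy--Schwarz inequality applied to the positive definite Hermitian form $h(Z,W):=\langle Z,\Wbar\rangle$. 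Combining the two observations, for any linearly independent $Z,W$ one gets $R(Z,W,\Zbar,\Wbar)=\widetilde R(Z,W,\Zbar,\Wbar)+\e\,\I(Z,W,\Zbar,\Wbar)>0$, so $(M,g_0)$ has positive complex sectional curvature.

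With this in hand, the Corollary of Section 2 — which itself rests on Theorem \ref{thm:PCSC}, the B\"ohm--Wilking pinching family construction, and Theorem 5.1 of \cite{BW} — applies word for word and produces the asserted deformation of $g_0$ to a metric of constant positive curvature. I do not expect any genuine obstacle in this argument: the only point that needs attention is the passage from the pointwise pinching hypothesis to a single uniform $\e>0$, and that is immediate from the compactness of $M$ and the continuity and positivity of $k$. (If one prefers, the same uniformity can be phrased by noting that $\I$ attains a positive minimum on the compact space of normalized linearly independent pairs $(Z,W)$.)
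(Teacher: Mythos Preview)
Your proposal is correct and follows exactly the route the paper intends: the paper does not write out a separate proof of Corollary~\ref{bs2} at all, presenting it simply as ``the consequence'' of Proposition~\ref{dualyz} together with the Corollary of Section~2, and you have faithfully supplied the missing details (uniform $\e$ via compactness, strict positivity of $\I(Z,W,\Zbar,\Wbar)$ via Cauchy--Schwarz). One small remark: the uniform $\e$ is convenient but not strictly necessary, since Proposition~\ref{dualyz} already gives a pointwise $\e(p)>0$ and that alone forces $R(Z,W,\Zbar,\Wbar)>0$ at every point.
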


\bigskip

In the next section we shall prove a generalized version of this result.

\section{Generalization}

In this section we shall generalize Corollary \ref{bs2}. We first
start with the following proposition.

\begin{proposition} Let $(M, g(t))$ be a solution the Ricci flow. Assume that at $t=0$,
$R-f_0(x)\I$ has nonnegative complex sectional curvature for some
continuous $f\ge 0$. Let $f(x, t)$ be the solution to $\left(D_t
-\Delta \right)f(x, t)=0$ with the initial data $f(x,0)=f_0(x)$.
Then $\widetilde{R}=R-f(x,t)\I$ has nonnegative complex sectional
curvature for $t>0$.
\end{proposition}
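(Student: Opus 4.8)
The plan is to run the Hamilton maximum principle for the evolving tensor $\widetilde R = R - f(x,t)\,\I$, exactly as in the proof that underlies Theorem \ref{thm:PCSC}, treating the subtracted term as a forcing. First I would compute the evolution equation for $\widetilde R$. Since $R$ satisfies $\left(D_t-\Delta\right) R = Q(R)$ and $f$ satisfies the heat equation $\left(D_t-\Delta\right)f = 0$, and since $\left(D_t-\Delta\right)(f\,\I) = (\left(D_t-\Delta\right)f)\,\I = 0$ because $\I$ is parallel and constant along the flow (up to the standard identification via the moving frame), we get
$$
\left(D_t-\Delta\right)\widetilde R = Q(R).
$$
The point is then to rewrite $Q(R)$ in terms of $\widetilde R$. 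I would use $R = \widetilde R + f\,\I$ and the bilinearity of $Q$, together with the explicit action of $Q$ (or $R^2+R^{\#}$) on the identity operator, to obtain $Q(R) = Q(\widetilde R) + (\text{terms linear in }\widetilde R \text{ with coefficient } f) + f^2\,Q(\I)$. All of these extra terms are manifestly nonnegative on the relevant null vectors: $Q(\I)$ corresponds to the curvature operator of the round sphere and so has positive complex sectional curvature, and the mixed terms, being of the form $f$ times expressions that vanish or are controlled at a first zero of $\widetilde R(Z,W,\Zbar,\Wbar)$, contribute nonnegatively.

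Next I would carry out the first-time-of-contact argument for the cone of curvature operators with nonnegative complex sectional curvature, now applied to $\widetilde R$. Suppose $t_0>0$ is the first time at which there exist $p\in M$ and linearly independent $Z,W\in T_pM\otimes\C$ with $\widetilde R(Z,W,\Zbar,\Wbar)=0$. Extend $Z,W$ so that $\nabla Z=\nabla W=0$, $D_tZ=D_tW=0$ at $p$. The second-variation argument reproduced in the excerpt — which depends only on $\widetilde R(Z,W,\Zbar,\Wbar)$ being a minimum value of $0$ of the quadratic form, not on any pinching hypothesis — applies verbatim to $\widetilde R$ in place of $R$, so that Mok's lemma gives $Q(\widetilde R)(Z,W,\Zbar,\Wbar)\ge 0$ at this point. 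Combining with the decomposition of $Q(R)$ above, $\left(D_t-\Delta\right)\widetilde R(Z,W,\Zbar,\Wbar) = Q(R)(Z,W,\Zbar,\Wbar) \ge 0$. By Hamilton's maximum principle for systems (the ODE-to-PDE comparison), since the cone of nonnegative-complex-sectional-curvature operators is closed, convex, and $O(n)$-invariant, and the reaction term points into the cone at a boundary point, the condition $\widetilde R \succeq 0$ (in this sense) is preserved for $t>0$.

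The main obstacle I expect is the algebra of the decomposition $Q(R) = Q(\widetilde R) + f\cdot(\cdots) + f^2 Q(\I)$ and, more precisely, checking that the linear-in-$\widetilde R$ cross terms are nonnegative when evaluated on a null pair $(Z,W)$ at the first contact time. This is where one uses both that $\widetilde R(Z,W,\Zbar,\Wbar)=0$ is a \emph{minimum} (so the full second-variation inequalities \eqref{2ndvar1}--\eqref{2ndvar3} hold for $\widetilde R$) and the explicit structure of $Q$ acting between a general curvature operator and $\I$; one should verify that the cross terms assemble into an expression of the shape appearing in the Mok/Berger-type estimate, hence controlled in sign. A secondary technical point is the justification that $f(x,t)\,\I$ has vanishing $\left(D_t-\Delta\right)$ as a tensor along the Ricci flow — i.e.\ that the identity section of $S^2(\wedge^2 TM)$ is parallel and $D_t$-invariant in the moving frame — which is standard but should be stated. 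Once these are in place the conclusion is immediate from the maximum principle.
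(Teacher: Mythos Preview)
Your overall strategy is exactly the paper's: derive $(D_t-\Delta)\widetilde R = Q(R)$, expand $Q(R)$ around $Q(\widetilde R)$, apply the Section~2 second-variation/Mok argument to $\widetilde R$ at the first contact point to get $Q(\widetilde R)(Z,W,\Zbar,\Wbar)\ge 0$, and then check that the remaining terms are nonnegative there. The evolution equation and the treatment of $Q(\widetilde R)$ are fine.

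The one place where your description drifts from what actually works is the handling of the cross terms. You suggest these ``vanish or are controlled at a first zero'' and that one should appeal again to the second-variation inequalities \eqref{2ndvar1}--\eqref{2ndvar3} or a Mok/Berger-type estimate. That is not the mechanism. Using Lemma~2.1 of \cite{BW} one finds (equivalently to your bilinear expansion)
\[
Q(R)=Q(\widetilde R)+2f\,\Ric(R)\wedge\id-(n-1)f^2\,\I
      =Q(\widetilde R)+2f\,\Ric(\widetilde R)\wedge\id+(n-1)f^2\,\I,
\]
so the linear term is $2f\,\Ric(\widetilde R)\wedge\id$, which neither vanishes at the null pair nor is governed by the second-variation inequality. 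Its nonnegativity comes from a different, simpler source: at the first contact time $\widetilde R$ still has nonnegative complex sectional curvature, hence $\Ric(\widetilde R)\ge 0$ as a symmetric operator; and for any $A\ge 0$ one has
\[
A\wedge\id\,(Z,W,\Zbar,\Wbar)=\tfrac12\big(\langle A Z,\Zbar\rangle|W|^2+\langle A W,\Wbar\rangle|Z|^2-\langle A W,\Zbar\rangle\langle Z,\Wbar\rangle-\langle A Z,\Wbar\rangle\langle W,\Zbar\rangle\big)\ge 0
\]
by Cauchy--Schwarz for the Hermitian form $\langle A\cdot,\bar\cdot\rangle$. The paper organizes this by writing $2f\,\Ric(R)\wedge\id-(n-1)f^2\I=f\,\Ric(R)\wedge\id+f\,A\wedge\id$ with $A=\Ric(\widetilde R)$, but the content is the same. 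Once you replace your vague ``Mok/Berger-type'' remark with this Ricci-nonnegativity plus Cauchy--Schwarz step, your proof is complete and coincides with the paper's.
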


\begin{proof}
By Lemma 2.1 of \cite{BW} is easy to check that
\begin{equation}
\label{eqn:identity}
Q(\widetilde R)=Q(R)-2f\Ric(R)\wedge \id +(n-1) f^2 \I
\end{equation}
where $\id$ is the identity of $\R^n=T_pM$. Since $f(x,t)$ satisfies $\left(\frac{\partial}{\partial t} -\Delta \right)f(x, t)=0$,
\begin{equation}
\label{eqn:heateqn}
\left(D_t-\Delta\right)\widetilde{R} = Q(R)
\end{equation}
By assumption $\widetilde{R}$ has nonnegative complex sectional curvature at $t=0$.
Therefore there is a first time $t_0$ (possibly at $t=0$) at which for some $Z, W \in T_pM \otimes \C$ we have
$\widetilde{R}(Z, W, \Zbar, \Wbar)=0$. By the maximum principle applied to (\ref{eqn:heateqn}) the theorem
follows if we can show $Q(R)(Z,W, \Zbar, \Wbar)\ge 0$.
Since $\widetilde{R}$ is an algebraic curvature operator we can
apply the results of Section 2 to conclude that  $Q(\widetilde R)(Z, W, \Zbar, \Wbar)\ge 0$. The result then
follows using (\ref{eqn:identity}) if we can show that
\begin{equation}
\label{eqn:inequ}
\left(2f\Ric(R)\wedge \id -(n-1) f^2 \I\right)(Z, W, \Zbar, \Wbar)\ge 0,
\end{equation}
To verify (\ref{eqn:inequ}) first notice that at $(p, t_0)$, $\widetilde R$ has nonnegative Ricci curvature.
Hence at $(p, t_0)$
$$
A\doteqdot \Ric(\widetilde{R})=\Ric(R)-(n-1)f \id \ge 0
$$
as element of $S^2(\R^n)$. From this, at $(p, t_0)$,
$$
\Ric(R) \ge (n-1)f \id \ge 0
$$
Thus, at $(p, t_0)$,
\begin{eqnarray*}
\left(2f\Ric(R)\wedge \id -(n-1) f^2 \I\right) = f\Ric(R) \wedge \id + fA\wedge \id.
\end{eqnarray*}
On the other hand
\begin{eqnarray*}
A\wedge \id (Z, W, \Zbar, \Wbar)&=&\frac{1}{2}\langle A(Z)\wedge
W+Z\wedge A(W), \Zbar\wedge \Wbar\rangle\\
&=& \frac{1}{2} \left(\langle A(Z), \Zbar\rangle |W|^2 +\langle
A(W), \Wbar\rangle |Z|^2\right)\\
&\, &  -\frac{1}{2}\left(\langle A(W), \Zbar \rangle \langle Z,
\Wbar \rangle +\langle A(Z), \Wbar \rangle \langle W, \Zbar \rangle
\right)
\end{eqnarray*}
which is nonnegative by the Cauchy-Schwartz inequality and $A \geq 0$.
Similarly,
$$
\Ric(R) \wedge \id(Z, W, \Zbar, \Wbar) \geq 0
$$
The result follows.
\end{proof}

\begin{corollary} If $f_0(x)\ge 0$ and $f_0(x_0)>0$ for some $x_0$,
then $R$ has positive complex sectional curvature for
$t>0$.
\end{corollary}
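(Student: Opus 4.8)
The plan is to read off the corollary from the preceding proposition, once we know that the scalar factor $f(x,t)$ becomes strictly positive for all positive time. Recall that $f$ solves $(D_t-\Delta)f=0$, i.e.\ $(\partial_t-\Delta_{g(t)})f=0$, with $f(\cdot,0)=f_0\ge 0$ and $f_0(x_0)>0$. By the weak maximum principle $f\ge 0$ on $M\times[0,\infty)$. If $f$ vanished at some $(x_1,t_1)$ with $t_1>0$, then $f$ would attain its space--time minimum value $0$ at an interior point, and the strong maximum principle for parabolic equations (which applies verbatim to the time-dependent Laplacian) would force $f\equiv 0$ on $M\times[0,t_1]$ --- here one uses that $M$ is connected --- contradicting $f_0(x_0)>0$. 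Hence $f(x,t)>0$ for every $x\in M$ and every $t>0$.

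Next I would invoke the preceding proposition: since $R-f_0\I$ has nonnegative complex sectional curvature at $t=0$, the tensor $\widetilde R=R-f(x,t)\I$ has nonnegative complex sectional curvature for all $t>0$, that is, $\widetilde R(Z,W,\Zbar,\Wbar)\ge 0$ for all $p\in M$ and all $Z,W\in T_pM\otimes\C$. For linearly independent $Z,W$ we then write
\[
R(Z,W,\Zbar,\Wbar)=\widetilde R(Z,W,\Zbar,\Wbar)+f(x,t)\,\I(Z,W,\Zbar,\Wbar),
\]
and compute $\I(Z,W,\Zbar,\Wbar)=|Z|^2|W|^2-|\langle Z,\Wbar\rangle|^2$, which is simply the assertion that the round sphere has positive complex sectional curvature: by the Cauchy--Schwarz inequality for the Hermitian form $(Z,W)\mapsto\langle Z,\Wbar\rangle$ this quantity is nonnegative, and it is strictly positive precisely because $Z$ and $W$ are linearly independent. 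Combining the two displayed facts with $f(x,t)>0$ yields $R(Z,W,\Zbar,\Wbar)\ge f(x,t)\,\I(Z,W,\Zbar,\Wbar)>0$, which is the claim.

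There is no step of real substance here; the argument is the preceding proposition plus the observation that adding a strictly positive multiple of $\I$ turns nonnegativity of the complex sectional curvature into positivity. The only point that requires a little care is the strict positivity of $f$ for $t>0$, and that is exactly where the strong maximum principle and the connectedness of $M$ enter.
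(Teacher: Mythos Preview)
Your argument is correct and is exactly the reasoning the paper leaves implicit: the corollary is stated without proof, as an immediate consequence of the preceding proposition together with the strong maximum principle for the heat equation. Your computation $\I(Z,W,\Zbar,\Wbar)=|Z|^2|W|^2-|\langle Z,\Wbar\rangle|^2>0$ for linearly independent $Z,W$ is the only small verification needed, and you handle it correctly.
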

This together with Proposition  \ref{dualyz} implies the following
result.

\begin{corollary}
Let $(M, g_0)$ be a compact Riemannian manifold. Assume that there
exists continuous function $k(p), \delta(p) \ge 0$ such that
$\mathcal{P}=\{p\, | k(p)>0\}$ is dense and $\delta(p_0)>0$ for some
$p_0\in \mathcal{P}$, with the property that
$$
\frac{1+\delta(p)}{4}k(p)\le R(X, Y, X, Y)/|X\wedge Y|^2\le
(1-\delta(p))k(p).
$$
Then the normalized Ricci flow deforms $(M, g_0)$ into a metric of
constant curvature.
\end{corollary}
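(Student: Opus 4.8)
The plan is to combine the Proposition above (nonnegativity of the complex sectional curvature of $\widetilde R=R-f(x,t)\I$ is preserved under the flow) and the Corollary just after it (positive complex sectional curvature for $t>0$) with the Corollary at the end of Section~2 (positive complex sectional curvature $\Rightarrow$ the normalized Ricci flow deforms the metric to one of constant positive curvature); the only new ingredient needed is the construction of the auxiliary function $f_0$.

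First I would check that $(M,g_0)$ has nonnegative complex sectional curvature everywhere. At a point $p\in\mathcal{P}$ the hypothesis, together with $\delta(p)\ge0$ and $k(p)>0$, gives the pointwise $1/4$-pinching $\frac{k(p)}{4}\le R(X,Y,X,Y)/|X\wedge Y|^2\le k(p)$, so $R(Z,W,\Zbar,\Wbar)\ge0$ at $p$ by the first statement of Proposition~\ref{dualyz}. At a point $p\notin\mathcal{P}$ we have $k(p)=0$, which forces all sectional curvatures at $p$, and hence $R$ itself at $p$, to vanish. Thus $R(g_0)$ has nonnegative complex sectional curvature at every point of $M$. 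Next I would produce $f_0$: since $k$ and $\delta$ are continuous with $k(p_0)>0$ and $\delta(p_0)>0$, there is a precompact neighborhood $U$ of $p_0$ and constants $k_1,\delta_1>0$ with $k(p)\ge k_1$, $\delta(p)\ge\delta_1$ on $\bar U$; applying the second statement of Proposition~\ref{dualyz} at each point of $\bar U$ with $\epsilon'=\delta_1 k_1/4$ shows that $R(p)-\epsilon'\I$ has nonnegative complex sectional curvature for all $p\in\bar U$. Choose any continuous $f_0\colon M\to[0,\epsilon']$ supported in $U$ with $f_0(p_0)>0$. Then $R-f_0(x)\I$ has nonnegative complex sectional curvature at every point of $M$: on $U$ it equals $(R-\epsilon'\I)+(\epsilon'-f_0(x))\I$, a sum of two operators with nonnegative complex sectional curvature (recall $\I$ itself has positive complex sectional curvature), while off $U$ it is just $R$.

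Finally I would run the machine. Let $(M,g(t))$, $t\in[0,T)$, be the Ricci flow with $g(0)=g_0$, and let $f(x,t)$ solve $(D_t-\Delta)f=0$ with $f(\cdot,0)=f_0$. By the Proposition above, $\widetilde R=R-f(x,t)\I$ has nonnegative complex sectional curvature for all $t>0$; and since $f_0\ge0$, $f_0(p_0)>0$, the strong maximum principle for the (time-dependent) heat equation gives $f(x,t)>0$ on $M\times(0,T)$, whence $R=\widetilde R+f(x,t)\I$ has positive complex sectional curvature for every $t>0$ — this is the Corollary just after the Proposition. Fixing any $t_1\in(0,T)$, the compact manifold $(M,g(t_1))$ has positive complex sectional curvature, so the Corollary of Section~2 applies: the normalized Ricci flow starting from $g(t_1)$ deforms it into a metric of constant positive curvature. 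Since up to time $t_1$ the normalized flow issuing from $g_0$ differs from the unnormalized one only by a time-dependent rescaling, the normalized Ricci flow deforms $(M,g_0)$ into a metric of constant curvature, as claimed. I expect the step requiring the most care to be the middle one — converting the strict pinching, which at worst holds only at the single point $p_0$, into a continuous nonnegative $f_0$, not identically zero, with $R-f_0\I$ globally of nonnegative complex sectional curvature; this is exactly what the uniform pinching on $\bar U$ coming from Proposition~\ref{dualyz} and the positivity of the complex sectional curvature of $\I$ are used for.
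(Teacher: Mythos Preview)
Your proposal is correct and follows exactly the route the paper indicates: the paper merely says ``This [Corollary on positivity for $t>0$] together with Proposition~\ref{dualyz} implies the following result,'' and you have supplied the details --- using the first part of Proposition~\ref{dualyz} (plus density of $\mathcal{P}$) for global nonnegativity, the second part near $p_0$ to build $f_0$, then invoking the Proposition and Corollary of Section~4 and finally the Corollary of Section~2.
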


\section{Characterization of various invariant curvature cones}

In \cite{BS}, the authors introduced two invariant curvature cones $\tilde C$
and $\hat C$ motivated from their result that the nonnegativity of the isotropic
curvature is preserved under Ricci flow. Let $\pi: \R^n \times \R^2\to \R^n$ 
be the projection and define $\hat{R}(x, y, z, w)=R(\pi(x), \pi(y), \pi(z), \pi(w))$
where $x, y, z, w \in T( \R^n \times \R^2)$. Recall from \cite{BS} that
$$
\hat C=\{ R\, |\hat R \ \, \mbox{ has nonnegative isotropic
curvature}\}
$$
The $\tilde C$ cone is defined  similarly using $\R^n \times \R$.

 We shall show that in fact the  cone of nonnegative complex
sectional curvature as  used in this paper
is the same as $\hat C$.\footnote{After the circulation of an
earlier version of this paper, the authors were informed by Brendle
and Schoen of the following statement: $M$ has nonnegative complex sectional curvature is
equivalent to $M\times \R^4$ has nonnegative isotropic curvature. This
motivated the current section.} We are indebted to Nolan Wallach for
the following result.

\begin{proposition}
The following are equivalent:

(1) $R\in \hat C$;

(2) $R$ has non-negative complex sectional curvature.
\end{proposition}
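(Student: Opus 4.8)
The equivalence of (1) and (2) is a purely algebraic statement about a fixed algebraic curvature operator $R$ on $\R^n$, so I would work entirely at a single point. Recall that $\hat R$ has nonnegative isotropic curvature means: for every orthonormal $4$-frame $\{e_1,e_2,e_3,e_4\}$ in $\R^n\times\R^2=\R^{n+2}$,
$$
\hat R(e_1,e_3,e_1,e_3)+\hat R(e_1,e_4,e_1,e_4)+\hat R(e_2,e_3,e_2,e_3)+\hat R(e_2,e_4,e_2,e_4)-2\hat R(e_1,e_2,e_3,e_4)\ge 0,
$$
and equivalently, after the standard reformulation, that $\hat R(Z,W,\bar Z,\bar W)\ge 0$ for every pair of vectors $Z,W\in (\R^{n+2})\otimes\C$ satisfying the \emph{isotropy conditions} $\langle Z,Z\rangle=\langle W,W\rangle=\langle Z,W\rangle=0$ (this complex reformulation is the one used in \cite{BS}). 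Since $\hat R$ ignores the $\R^2$-component via the projection $\pi$, we have $\hat R(Z,W,\bar Z,\bar W)=R(\pi(Z),\pi(W),\overline{\pi(Z)},\overline{\pi(W)})$, where here $\pi$ is complexified.

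\medskip

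\noindent (2) $\Rightarrow$ (1): Given $Z,W\in\C^{n+2}$ isotropic and orthogonal, set $Z'=\pi(Z),W'=\pi(W)\in\C^n$. If $Z',W'$ are complex-linearly independent then by hypothesis $R(Z',W',\bar Z',\bar W')\ge 0$ directly (note the complex sectional curvature condition as stated requires positivity only on linearly independent pairs; for nonnegativity we take closures, or observe that the quantity vanishes whenever $Z',W'$ are dependent, since $R(aZ',bZ',\cdot,\cdot)$ has an antisymmetric factor). Hence $\hat R(Z,W,\bar Z,\bar W)=R(Z',W',\bar Z',\bar W')\ge 0$, so $R\in\hat C$.

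\medskip

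\noindent (1) $\Rightarrow$ (2): This is the substantive direction and the main obstacle. Given arbitrary linearly independent $Z,W\in\C^n$, I must produce isotropic orthogonal $\tilde Z,\tilde W\in\C^{n+2}$ with $\pi(\tilde Z)=Z$, $\pi(\tilde W)=W$. Write $\tilde Z=Z+\zeta$, $\tilde W=W+\omega$ with $\zeta,\omega$ in the $\C^2$-factor; I need $\langle\zeta,\zeta\rangle=-\langle Z,Z\rangle$, $\langle\omega,\omega\rangle=-\langle W,W\rangle$, $\langle\zeta,\omega\rangle=-\langle Z,W\rangle$. Setting $\zeta=a_1+\i a_2$, $\omega=b_1+\i b_2$ with $a_i,b_i\in\R^2$ (spanned by orthonormal $f_1,f_2$), these are three complex (i.e.\ six real) equations in the eight real parameters $a_1,a_2,b_1,b_2\in\R^2$. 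The key algebraic fact — which I would check by a direct dimension count / explicit solving — is that this system is always solvable in $\R^2$: for instance first arrange $\langle\zeta,\zeta\rangle$ and $\langle\omega,\omega\rangle$ by choosing $a_1,a_2$ (resp.\ $b_1,b_2$) to realize prescribed values of $|a_1|^2-|a_2|^2+2\i\langle a_1,a_2\rangle$, which is surjective onto $\C$ for vectors in $\R^2$, then use the residual freedom (rotations) to adjust the cross term $\langle\zeta,\omega\rangle$. Once such a lift exists, $\tilde Z,\tilde W$ are isotropic and orthogonal in $\C^{n+2}$, so $R\in\hat C$ gives $0\le\hat R(\tilde Z,\tilde W,\overline{\tilde Z},\overline{\tilde W})=R(Z,W,\bar Z,\bar W)$, which is (2).

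\medskip

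\noindent I expect the genuine content to be exactly the lifting lemma in the last direction: showing two generic complex vectors in $\C^n$ can always be completed to an isotropic orthogonal pair after adding a $\C^2$-tail. I would isolate this as a short linear-algebra lemma (attributing it, as the paper does, to Wallach), prove it by the explicit parametrization above, and then the proposition follows in two lines in each direction as indicated.
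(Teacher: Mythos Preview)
Your proposal is correct and follows the same overall route as the paper: both directions reduce to the single lifting lemma that any linearly independent pair $Z,W\in\C^n$ can be extended to an isotropic, mutually orthogonal pair $\widetilde Z,\widetilde W\in\C^{n+2}$ by adding tails in the $\C^2$ factor, and the paper likewise attributes this to Wallach.

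The only real difference is how the lifting lemma itself is argued. You propose a parameter count (eight real unknowns versus six real equations) together with a sequential ``first fix $\langle\zeta,\zeta\rangle$ and $\langle\omega,\omega\rangle$, then rotate to adjust $\langle\zeta,\omega\rangle$'' scheme. The paper instead packages the three complex equations as the single matrix equation $XX^t=A$ with
\[
X=\begin{pmatrix} u&v\\ x&y\end{pmatrix},\qquad A=\begin{pmatrix} -\langle Z,Z\rangle & -\langle Z,W\rangle\\ -\langle Z,W\rangle & -\langle W,W\rangle\end{pmatrix},
\]
and observes that any complex symmetric $2\times 2$ matrix (equivalently, the binary quadratic form $as^2+2cst+bt^2$) can be diagonalized by $\operatorname{GL}(2,\C)$-congruence; taking square roots of the diagonal entries then produces $X$. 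This is cleaner and uniform, whereas your sequential approach, as written, is a sketch: the dimension count is only heuristic, and the ``residual rotation'' step needs care in degenerate cases (e.g.\ when $\langle Z,Z\rangle=0$ one must avoid the choice $\zeta=0$, and more generally one has to check that the $O(2,\C)$-orbit of $\langle\zeta,\omega\rangle$ actually sweeps out all of $\C$). It can be made to work, but the $XX^t=A$ reformulation disposes of all cases at once.
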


\begin{proof}
It suffices to show that given $Z, W \in \C^n$ linearly independent,
there exist extensions $\widetilde Z=Z+ue_1+v e_2$ and
$\widetilde{W}=W+xe_1+ye_2$ of $Z,W$ to vectors  $\widetilde{Z} ,  \widetilde{W}$ 
in $\C^n \times \C^2$ such that
$\operatorname{Span}\{\widetilde{Z}, \widetilde{W}\}$ is an
isotropic plane. Here $\{e_1, e_2\}$ is an orthornormal basis of
the factor  $\R^2$ in the definition of $\hat C$. 

The existence of such an extension is
equivalent to the solution of the matrix equation
$
X X^t =A
$
with
$$
X=\left(\begin{matrix} u& v\cr x&y\end{matrix}\right),\quad \quad
A=\left(\begin{matrix} a & c\cr c& b\end{matrix}\right)
$$
where $a=-\langle Z, Z \rangle$, $b=-\langle W, W\rangle$,
$c=-\langle Z, W \rangle$. That the matrix equation can be solved follows from the fact that quadratic
form $as^2 +2cst +b t^2$ can be diagonalized by transformations of
$\operatorname{GL}(2, \C)$.
\end{proof}

The second result characterizes the $\tilde C$ cone.  Let $Z, W \in \C^n$ be linearly independent.
We say the $2$-vector $Z\wedge W$ is isotropic if:
$$0 = \langle Z\wedge W, Z\wedge W\rangle \doteqdot \langle Z\, Z\rangle \langle W,  W\rangle - \langle Z,  W\rangle^2.$$

\begin{proposition}  The following are equivalent:

(1) $R\in \tilde C$;

(2) $R$ is non-negative on any isotropic $2$-vector $Z\wedge W$.

\end{proposition}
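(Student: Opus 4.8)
The plan is to run the argument of the previous proposition essentially verbatim, the only change being one of bookkeeping: here the auxiliary factor is $\R$ rather than $\R^2$, so the matrix equation one must solve has a $2\times 1$ unknown, hence a right-hand side of rank at most one — and that rank constraint turns out to be exactly the isotropy of $Z\wedge W$. Throughout, write $\C^{n+1}=\C^n\oplus\C e_1$ with $e_1$ spanning the complexification of the $\R$-factor, and recall that $\widetilde R(x,y,z,w)=R(\pi x,\pi y,\pi z,\pi w)$, so $\widetilde R$ depends only on the projections of its arguments.

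For $(1)\Rightarrow(2)$, I would take an isotropic $2$-vector $Z\wedge W$, so $\langle Z,Z\rangle\langle W,W\rangle=\langle Z,W\rangle^2$, and look for scalars $u,x$ with $u^2=-\langle Z,Z\rangle$, $x^2=-\langle W,W\rangle$, $ux=-\langle Z,W\rangle$. Then $\widetilde Z=Z+ue_1$ and $\widetilde W=W+xe_1$ span a totally isotropic $2$-plane of $\C^{n+1}$, they are linearly independent because $Z,W$ are, and since $\widetilde R$ only sees projections, $\widetilde R(\widetilde Z,\widetilde W,\overline{\widetilde Z},\overline{\widetilde W})=R(Z,W,\Zbar,\Wbar)$, which is $\ge0$ as $R\in\tilde C$. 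Finding $(u,x)$ is the equation $XX^t=A$ with $X$ the column $(u,x)^t$ and $A=\left(\begin{matrix}-\langle Z,Z\rangle&-\langle Z,W\rangle\cr -\langle Z,W\rangle&-\langle W,W\rangle\end{matrix}\right)$; isotropy of $Z\wedge W$ says precisely $\det A=0$, so $A$ is symmetric of rank $\le1$ and is therefore $vv^t$ for a column $v$ (diagonalize the quadratic form by a $\operatorname{GL}(2,\C)$ congruence, as in the previous proof, only now at most one diagonal entry survives), and $X=v$ works. The degenerate case $A=0$ is handled by $u=x=0$, when $\operatorname{Span}\{Z,W\}\subset\C^n$ is itself totally isotropic.

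For $(2)\Rightarrow(1)$, I would start from a totally isotropic $2$-plane $\sigma\subset\C^{n+1}$. Since $\langle e_1,e_1\rangle=1\ne0$, $\sigma$ contains no nonzero multiple of $e_1$, so $\pi$ is injective on $\sigma$ and $\pi(\sigma)$ is a $2$-plane in $\C^n$; choose a basis $Z,W$ of $\pi(\sigma)$ and lift it to a basis $Z+ue_1,W+xe_1$ of $\sigma$. Total isotropy of $\sigma$ forces $u^2=-\langle Z,Z\rangle$, $x^2=-\langle W,W\rangle$, $ux=-\langle Z,W\rangle$, so squaring the last relation and comparing with the product of the first two yields $\langle Z,Z\rangle\langle W,W\rangle=\langle Z,W\rangle^2$, i.e. $Z\wedge W$ is isotropic. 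Then $\widetilde R$ evaluated on $\sigma$ equals $R(Z,W,\Zbar,\Wbar)$, which is $\ge0$ by hypothesis; since $\sigma$ was arbitrary, $\widetilde R$ has nonnegative isotropic curvature, that is, $R\in\tilde C$.

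There is no real obstacle: the content is entirely in matching isotropic $2$-planes of $\C^{n+1}$ with isotropic $2$-vectors of $\C^n$ under projection. The one point requiring a little care is the rank bookkeeping — a $2\times1$ unknown can only produce a rank $\le1$ symmetric matrix, which is exactly why the conclusion must be restricted to isotropic $2$-vectors rather than to all $2$-vectors — together with the trivial degenerate case $A=0$. Everything else, namely injectivity of $\pi$ on an isotropic plane and the identity $\widetilde R=R\circ\pi$, is immediate.
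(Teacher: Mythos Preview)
Your proof is correct and follows essentially the same approach as the paper's: the paper proves $(1)\Rightarrow(2)$ by exactly your choice of scalars $a,b$ (your $u,x$) with $a^2=-\langle Z,Z\rangle$, $b^2=-\langle W,W\rangle$, $ab=-\langle Z,W\rangle$ to extend to an isotropic plane in $\C^{n+1}$, and leaves the converse ``similar and left to the reader.'' Your matrix-equation framing and the explicit converse argument (injectivity of $\pi$ on $\sigma$ since $e_1$ is anisotropic, then reading off $\langle Z,Z\rangle\langle W,W\rangle=\langle Z,W\rangle^2$) simply supply the details the paper omits.
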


\begin{proof}
To show (1) implies (2) we suppose that $Z, W \in \C^n$ are linearly independent and satisfy:
$$\langle Z\, Z\rangle \langle W,  W\rangle - \langle Z,  W\rangle^2 = 0.$$
Then there exist complex scalars $a, b$ such that $a^2=-\langle Z, Z \rangle$, $b^2=-\langle W, W\rangle$,
$ab=-\langle Z, W \rangle$. Hence $\widetilde Z=Z+a e_1$ and $\widetilde W=W+b e_1$ span an isotropic
$2$-plane, where $e_1$ is a unit vector in $\R$. Since $R$ has non-negative isotropic curvature on $\R^n \times \R$,
$$R(\widetilde Z, \widetilde W, \bar{ \widetilde Z}, \bar{\widetilde W}) \geq 0.$$ 
Hence,
$$R(Z, W, \bar{ Z}, \bar{W}) \geq 0.$$

The converse is similar and left to the reader.
\end{proof}

\bibliographystyle{amsalpha}

\end{document}